\documentclass{styles/svproc}
\usepackage{amsmath}
\usepackage{graphicx}
\usepackage{multicol}
\usepackage{footmisc}[bottom]
\usepackage{enumitem}
\usepackage{url}
\usepackage{hyperref}

\usepackage{xcolor}

\newcommand{\abs}[1]{\lvert #1 \rvert}

\newcommand{\R}{\bbbr}

\begin{document}

    \mainmatter
    
    \title{On a Free-Endpoint Isoperimetric \\
    Problem in \texorpdfstring{$\R^2$}{R^2}}
    \toctitle{On a Free-Endpoint Isoperimetric Problem in \texorpdfstring{$\R^2$}{R^2}}
    \titlerunning{On a Free-Endpoint Isoperimetric Problem in \texorpdfstring{$\R^2$}{R^2}}
    
    \author{Stanley Alama \inst{1} \and Lia Bronsard \inst{2} \and Silas Vriend \inst{3}}
    \tocauthor{Stanley Alama, Lia Bronsard, and Silas Vriend}
    \authorrunning{Alama, Bronsard \& Vriend}
    
    \institute{Department of Mathematics \& Statistics, \\
        McMaster University, Hamilton ON L8S 4L8, Canada \\
        \email{alama@mcmaster.ca} \\
        \and \email{bronsard@mcmaster.ca} \\
        \and \email{vriendsp@mcmaster.ca}
    }

    \maketitle
    
    \begin{abstract}
        Inspired by a planar partitioning problem involving multiple improper chambers, this article investigates using classical techniques what can be said of the existence, uniqueness, and regularity of minimizers in a certain free-endpoint isoperimetric problem. By restricting to curves which are expressible as graphs of functions, a full existence-uniqueness-regularity result is proved using a convexity technique inspired by work of Talenti. The problem studied here can be interpreted physically as the identification of the equilibrium shape of a sessile liquid drop in half-space (in the absence of gravity). This is a well-studied variational problem whose full resolution requires the use of geometric measure theory, in particular the theory of sets of finite perimeter, but here we present a more direct, classical geometrical approach. Conjectures on improper planar partitioning problems are presented throughout.

        \keywords{Calculus of variations $\cdot$ Isoperimetric problem $\cdot$ Geometric measure theory $\cdot$ Sets of finite perimeter $\cdot$ Sessile drop $\cdot$ Equilibrium shape $\cdot$ Improper partitioning problem} $\cdot$ Minimizing improper cluster
    \end{abstract}
    \section{Introduction}\label{sec:intro}

    \subsection{Partitioning Problems}

    The isoperimetric problem has its roots in antiquity \cite{Blasjo}. The traditional setup is as follows: given a compact domain $S$ in the plane $\R^2$ with boundary $\partial S$ of fixed length $\ell$, what is the configuration of the boundary $\partial S$ for which the area enclosed is maximal? The answer, perhaps unsurprisingly, is that $\partial S$ should be a circle. This question, together with the brachistochrone problem of Bernoulli, gave impetus to the mathematical field known as the calculus of variations.

    One can ``dualize" the traditional isoperimetric problem as follows: we may ask, given a domain $S$ of fixed area $A$ in the plane $\R^2$, what is the configuration of the boundary $\partial S$ for which the perimeter is minimal? The answer, once again, is that $\partial S$ should be a circle. In general, the term \textit{isoperimetric problem} is given to any variational problem wherein one geometrical quantity is maximized or minimized, while another is held fixed.
    
    Similarly to the dual of the isoperimetric problem given above, the following question can be posed about two domains $S_1$, $S_2$ in the plane $\R^2$, of fixed areas $A_1$, $A_2$ respectively: which configuration of the boundaries of these domains yields minimal perimeter, if such a configuration even exists? Note that we allow the two domains to reduce their total perimeter by sharing a portion of their boundaries, so the minimal-perimeter configuration need not be two disjoint discs. We remark that in this setup, the plane $\R^2$ is partitioned almost disjointly (i.e. with pairwise disjoint interiors) into three \textit{chambers}: two of finite measure, and one of infinite measure. Such problems are known as \textit{partitioning problems}, and their solutions are called \textit{minimizing clusters.}
	
	The famed double bubble conjecture asserted that given two domains in $\R^n$, each with fixed $n$-dimensional volume, the so-called standard double bubble is the configuration which uniquely minimizes perimeter. The first partial resolution of this conjecture came in work of Foisy et al.\ \cite{Foisy_etal} in 1993: they showed via ad hoc geometric methods that the standard double bubble in $\R^2$ uniquely minimizes perimeter. A resolution of the $3$-dimensional case \cite{HMRRproofDB} and then the $n$-dimensional case  \cite{Lawlor2014,Reichardt2007} followed not so long after. Beyond mere existence, further results have demonstrated the regularity and stability of such minimizing clusters \cite{Morgan1994soap,MorganWichi2002}. 
 
    The $2$-bubble problem can be generalized to the $q$-bubble problem, $q \geq 2$, but only the $3$-bubble case in $\R^2$ has been resolved to date \cite{wichiramala2004proof}. Sullivan \cite{sullivan1999geometry} conjectured that the optimal configuration in all dimensions should be a certain ``standard" bubble cluster. In 2022, Milman-Neeman \cite{milman2022structure,milman2023plateau} announced a proof of the $q$-bubble conjecture in $\R^n$ and $\bbbs^n$ for all $q \leq \mathrm{min}(5, n+1)$.  

    Another natural partitioning problem is the following: given $N$ domains $S_1, \dots, S_N$ in $\R^n$, each with  finite perimeter and fixed $n$-dimensional volumes $V_1, \dots, V_N$, and $M$ domains $U_1, \dots, U_M$ of locally finite perimeter and infinite measure, with all domains having pairwise disjoint interiors  and with the union having full measure in $\R^n$, what is the configuration of the interfaces which locally minimizes $(n-1)$-dimensional surface area? We say that a candidate configuration is \textit{locally perimeter minimizing} if it minimizes $(n-1)$-dimensional surface area relative to any other candidate configuration when tested within arbitrary compact sets, outside of which the configurations coincide. 
    
    The sets $S_1, \dots, S_N$ of finite measure are referred to as \textit{(proper) chambers}, while the sets $U_1, \dots, U_M$ of infinite measure are referred to as \textit{improper chambers}. The tuple $(S_1, \dots, S_N, U_1, \dots, U_M)$ is called an \textit{(improper) cluster}. We call such a partitioning problem \textit{improper}, and a solution to such a problem is known as a \textit{minimizing improper cluster} \cite{maggi2012sets}.

    The simplest problem with at least one chamber and multiple improper chambers is in dimension $n = 2$, with $N = 1$ chamber with a fixed area $A$, and $M = 2$ improper chambers. We conjecture that the optimal configuration for this problem is given by a \textit{vesica piscis} (the intersection of two discs with equal radii) of the desired area meeting a  pair of collinear rays at triple junctions with all angles equal to $120^\circ$; see Figure \ref{fig:optimal-solution}.

    \begin{figure}
        \centering
        
        \includegraphics[scale=0.275]{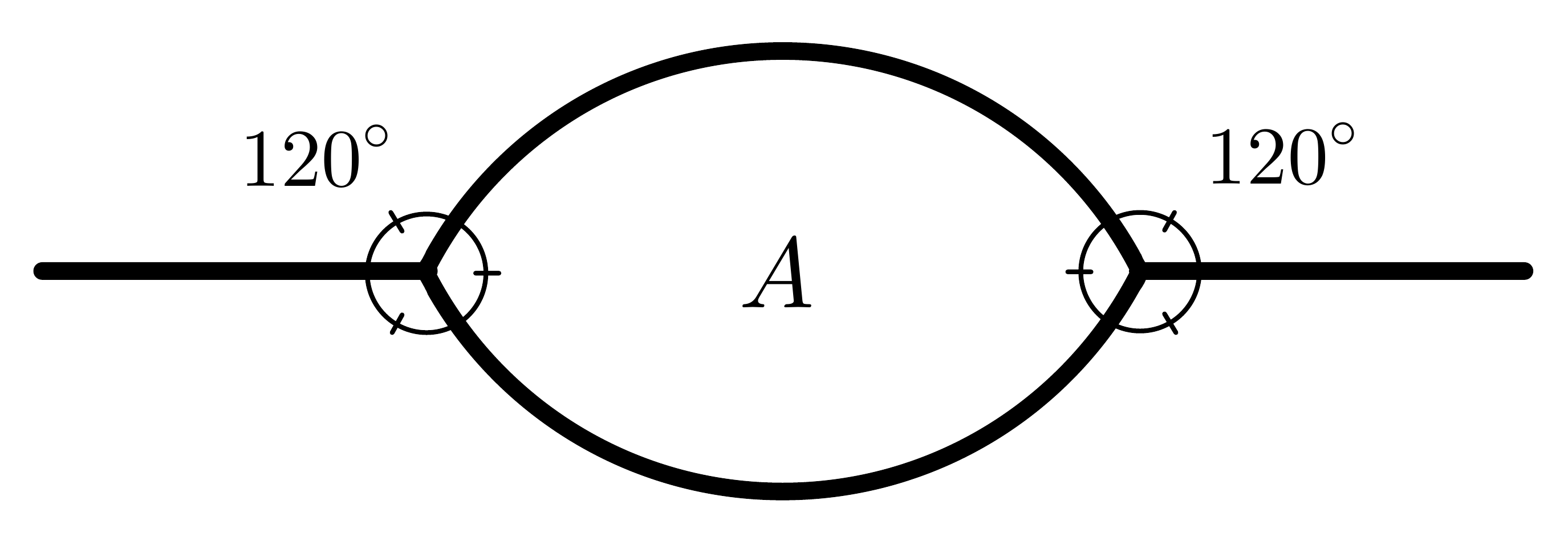}
        \caption{Conjectured optimal configuration for the improper planar partitioning problem with one chamber and two improper chambers}
        
        \label{fig:optimal-solution}
    \end{figure}
    
    This conjecture is motivated by several factors: in other partitioning
    problems, one finds that the interfaces between the chambers are minimal surfaces (i.e.\ surfaces with zero mean curvature) which meet at certain standard  junctions; see Maggi \cite{maggi2012sets}. Furthermore, in the work of Bellettini-Novaga \cite{bellettini2011curvature} and Schn\"urer et al.\ \cite{schnurer2011evolution}, this configuration appears as the limiting configuration of a planar network with two triple junctions under curve-shortening flow. One may also observe that it is conformally equivalent to the standard double-bubble via an inversion of the punctured plane $\R^2 \setminus \{(0,0)\}$ through an appropriately chosen circle. 
    
    We are therefore interested in solving the following variational problem in $\R^2$. Given a positive constant $A > 0$ (the area to enclose), we wish to partition the plane into almost disjoint measurable sets $\Omega_0$, $\Omega_+$, and $\Omega_-$ such that:
    \begin{enumerate}[label=(\roman*)]
        \item $\Omega_0$ has Lebesgue measure $\abs{\Omega_0} = A$,
        
        \item $\Omega_+$ and $\Omega_-$ have Lebesgue measure $\abs{\Omega_+} = \abs{\Omega_-} = \infty$, and
        
        \item for any compact set $K$ and 
        for any other almost disjoint partition $(\tilde{\Omega}_0, \tilde{\Omega}_+, \tilde{\Omega}_-)$ satisfying properties (i) and (ii), which agrees with $(\Omega_0, \Omega_+, \Omega_-)$ outside of $K$, the local perimeter of the interfaces, as measured within $K$ , is at least that of $(\Omega_0, \Omega_+, \Omega_-)$.
    \end{enumerate}
    We will see below that we may rescale so that $A = 2$.

    As posed, this variational problem raises several questions:

    \begin{enumerate}[label=(\roman*)]
        \item What is meant by perimeter? 

        \item Does there exist a minimizer?

        \item If a minimizer exists, is it unique? Are the chambers connected? Is there a line of symmetry? Is $\Omega_0$ necessarily compact?

        \item Are the interfaces between $\Omega_+, \Omega_-$ and $\Omega_0$ smooth $1$-dimensional manifolds?

        \item Is there a singular set where the interfaces fail to be $1$-dimensional manifolds? If so, what does the singular set look like?
    \end{enumerate}

    In answer to question (i), a very general framework in which to work would be the theory of \textit{sets of finite perimeter}. These are sets $E$ for which the characteristic function $\chi_E$ has a distributional derivative representable by an $\R^2$-valued Radon measure with finite total variation; that is, for which the characteristic function is of \textit{(locally) bounded variation}, $\chi_E \in BV_{\mathrm{loc}}(\R^2)$. For the relevant definitions, see Maggi \cite{maggi2012sets}.
    
    Towards a proof within the framework of sets of finite perimeter, assume we have a candidate for such a locally perimeter minimizing cluster. We conjecture that $\Omega_0$ is necessarily compact, and the technique of \textit{Steiner symmetrization} (a measure-preserving, perimeter-diminishing symmetrization with respect to a hyperplane; see Talenti \cite{talenti1993standard}) would allow us to make the following simplifying assumptions:
    \begin{itemize}
         \item \textit{Assumption:} away from $\Omega_0$, the improper chambers $\Omega_+$, $\Omega_-$ meet along a straight line. (This seems intuitively plausible: deviations from straightness must necessarily increase arclength.)
        
        \item \textit{Assumption:} $\Omega_0$ shares boundary with both $\Omega_+$ and $\Omega_-$. (Also intuitively plausible: we can reduce the local arclength by having $\Omega_0$ share boundary with the improper chambers.)
        
        \item \textit{Assumption:} $\Omega_0$ is convex, and is symmetrical about the axis formed by the interface between $\Omega_+$ and $\Omega_-$ away from $\Omega_0$.

        \item \textit{Assumption:} By rotating and translating our coordinate system if necessary, we can assume that $\Omega_0$ is centered at $O = (0,0)$, and that
        \begin{align*}
            \Omega_+ = \{(x,y) : y \geq 0\} \setminus \mathrm{int}(\Omega_0), \\
            \Omega_- = \{(x,y) : y \leq 0\} \setminus \mathrm{int}(\Omega_0),
        \end{align*}
        so that $\Omega_+$ and $\Omega_-$ meet along the $x$-axis away from $\Omega_0$.

        \item \textit{Assumption:} Denote the boundary of $\Omega_0$ by $\partial \Omega_0 = \Gamma_+ \cup \Gamma_-$, where $\Gamma_+ = \partial \Omega_+ \cap \partial \Omega_0$ and $\Gamma_- = \partial \Omega_- \cap \partial \Omega_0$. Then we assume for the purposes of our discussion that the curve $\gamma = \Gamma_+$ is a continuous parametrizable curve symmetric about the $y$-axis, lying in the upper half-space $H = \{y \geq 0\}$; see Figure \ref{fig:labeled-regions}.
    \end{itemize}

    \begin{figure}
        \centering
        \includegraphics[scale=0.33]{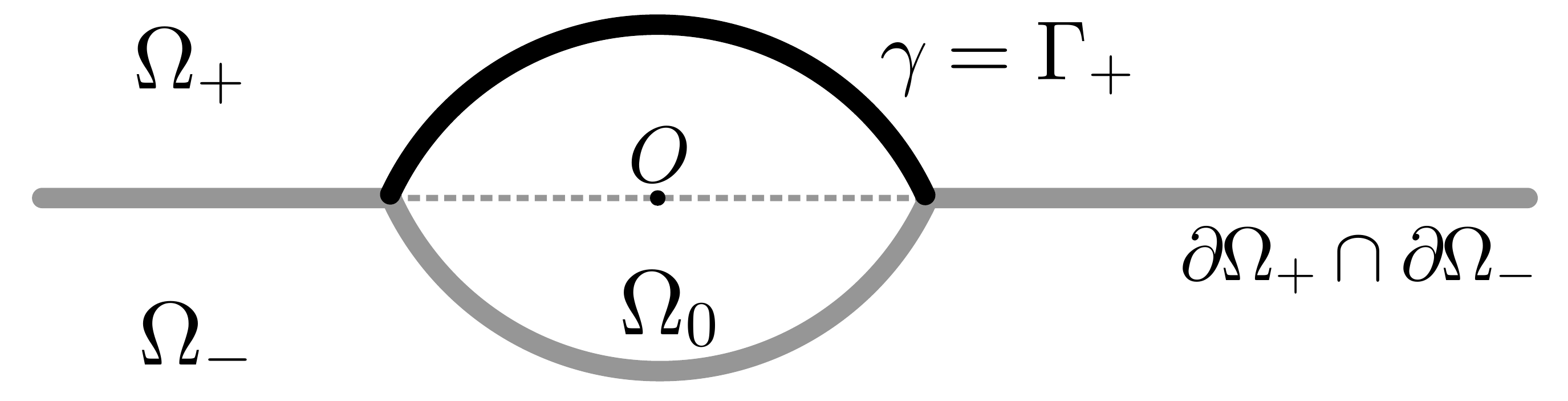}
        \caption{Symmetrized candidate configuration, with $\Omega_0$ centered at the origin $O$, symmetric about the $x$-axis (dashed), and with $\gamma = \Gamma_+$ (in black)}
        \label{fig:labeled-regions}
    \end{figure}
    
    We would therefore like to determine the shape of $\gamma$. \textit{A priori}, we do not know the location of the endpoints of $\gamma = \Gamma_+$, which we may assume are of the form $(\pm p, 0)$ for some $p > 0$. We wish to minimize the length of $\gamma$ and its reflection through the $x$-axis, while simultaneously locally minimizing the portion of the $x$-axis adjacent to $\gamma$, namely $\partial \Omega_+ \cap \partial \Omega_-$ . To achieve the latter, it is equivalent to maximize the distance between the endpoints of $\gamma$. 
    
    Denote the length of $\gamma$ by $\ell(\gamma)$, and the length of $\partial \Omega_0$ by $\ell(\partial \Omega_0)$; let $A(\gamma)$ denote the area enclosed by $\gamma$ and the $x$-axis; and let $d(\gamma) = 2p$ denote the distance along the $x$-axis between the endpoints of $\gamma$. We therefore wish to minimize the functional
    \begin{equation*}
        J[\gamma] = \ell(\gamma) - \frac{1}{2} d(\gamma) = \frac{1}{2}(\ell(\partial \Omega_0) - d(\gamma)),
    \end{equation*}
    for $\gamma = (x,y)$ a sufficiently regular parametrized curve satisfying 
    \begin{equation*}
        y \geq 0, \quad x(t_0) = -x(t_1) = p > 0, \quad A(\gamma) = 1.
    \end{equation*}
    This is a free-endpoint isoperimetric problem in the calculus of variations.

    As it turns out, the solution to this variational problem is well-known: using the theory of sets of finite perimeter, the resolution to this problem appears in Maggi \cite{maggi2012sets}, Thm.\ 19.21, in $n \geq 2$ dimensions and with a relative adhesion coefficient $\beta \in (-1, 1)$. In our special case above, $n = 2$ and  $\beta = \frac{1}{2}$. 

    \subsection{Article Outline}
    
    The goal of this article is to examine what may be said using \textit{classical} methods about existence, uniqueness, and regularity of minimizers in the variational problem addressed by Maggi Thm.\ 19.21, for the special case $n = 2$.
    
    The outline of the article is as follows:
    \begin{itemize}
        \item In Section \ref{sec:C1-graphs}, we present a uniqueness-regularity proof for graphs of $C^1$ functions to obtain a candidate for a minimizer. (Existence in this case can be proved as in Section \ref{sec:W11-graphs}.) 
    
        \item In Section \ref{sec:W11-graphs}, inspired by Talenti \cite{talenti1993standard}, we present an existence-uniqueness-regularity proof for graphs of $W^{1,1}$ functions. This proof is enabled by a useful strict convexity property of the integrand which is gained by restricting to the class of graphs of functions.

        \item In Section \ref{sec:conjectures}, we conclude with an open question and two conjectures related to the class of improper planar partitioning problems.
    \end{itemize}
    \section{Proof for Graphs of \texorpdfstring{$C^1$}{C1} Functions}\label{sec:C1-graphs}

    In this section, we present a classical uniqueness-regularity proof for the variational problem described in the introduction, with competition among graphs of $C^1$ functions. The variational problem is made precise below, with $0 < \beta < 1$.  
     
    \begin{table}
        \centering
        \begin{tabular}{c c}
             $\mathtt{Among}$ & nonnegative $C^1$ functions $u : [-p,p] \to \R$ with $p > 0$ free \\ [1ex]
             $\mathtt{minimize}$ & $\displaystyle J[u] = \int_{-p}^p \left[\sqrt{1+u'^2} - \beta\right] \mathrm{d}x$ \\ [1ex]
             $\mathtt{subject\;to}$ & $\displaystyle K[u] = \int_{-p}^p u \,\mathrm{d}x = 1$ \\ [1ex]
             $\mathtt{and}$ & $u(p) = u(-p) = 0.$
        \end{tabular}
        \label{tab:variational-problem-graphs}
    \end{table}

     We note in particular that this problem is scale invariant under the change of variable $\tilde u=A u$ and $\tilde x=A x$ where $A$ is any prescribed area. This allows us to solve the problem choosing $A=1$. We also remark that scale invariance requires the endpoints to be free; for fixed support $[-p,p]$, existence of a minimizer would require the area to be at most $\frac12 \pi p^2$.
    
    \begin{theorem}
        If $0 < \beta < 1$, then there exists a unique smooth minimizer $u(x)$ for the above variational problem.
        In particular, the unique minimizer $u(x)$ is an arc of a circle of the form
        \begin{equation*}
            u(x) = \sqrt{R^2 - x^2} - \beta R,
        \end{equation*}
        where the radius of curvature $R > 0$ is determined by
        \begin{equation*}
            R^{-1} = \sqrt{\arccos \beta - \beta\sqrt{1-\beta^2}},
        \end{equation*}
        and $u(x)$ is defined on $[-p,p]$ with $p = \sqrt{1 - \beta^2} R$. Furthermore, the graph of $u$ meets the $x$-axis at $\pm p$ with interior angle $\arccos \beta$.
    \end{theorem}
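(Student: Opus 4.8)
The plan is to treat this as a constrained free-endpoint problem and to extract the minimizer from its necessary conditions, deferring the genuine existence claim to the compactness/convexity argument of Section \ref{sec:W11-graphs}, which applies here as well. First I would introduce a Lagrange multiplier $\lambda \in \R$ for the area constraint $K[u]=1$ and consider critical points of
\[
    \mathcal{J}[u,p] = \int_{-p}^{p}\left[\sqrt{1+u'^2} - \beta - \lambda u\right]\mathrm{d}x .
\]
Testing against interior variations $u \mapsto u + \eps\phi$ with $\phi \in C_c^\infty(-p,p)$ yields the Euler--Lagrange equation
\[
    \frac{\mathrm{d}}{\mathrm{d}x}\left(\frac{u'}{\sqrt{1+u'^2}}\right) = -\lambda ,
\]
which states that the signed curvature of the graph equals the constant $-\lambda$. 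Integrating once gives $\frac{u'}{\sqrt{1+u'^2}} = -\lambda(x - x_0)$ for some constant $x_0$; solving for $u'$ and integrating again produces the circular arc $u(x) = \sqrt{R^2 - (x-x_0)^2} + c_0$ with $R = \abs{\lambda}^{-1}$. Thus every $C^1$ critical point is an arc of a circle, and since we will find $p < R$, the function $\sqrt{R^2 - (x-x_0)^2}$ is real-analytic on a neighbourhood of $[-p,p]$; this already yields the asserted regularity.

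Next I would fix the free constants. The two boundary conditions $u(-p) = u(p) = 0$ force $x_0 = 0$ (so the arc is centred on the $y$-axis) and $c_0 = -\sqrt{R^2 - p^2}$. The remaining relation comes from the free endpoint: since $(p, u(p))$ is constrained only to slide along the $x$-axis, its total vertical displacement under an admissible variation vanishes, and the associated natural (transversality) condition is
\[
    \left[\sqrt{1+u'^2} - \beta - \lambda u - u'\cdot\frac{u'}{\sqrt{1+u'^2}}\right]_{x=p} = 0 .
\]
Because $u(p) = 0$, the multiplier term drops out and the bracket collapses to the relation $\frac{1}{\sqrt{1+u'(p)^2}} = \beta$; equivalently $\abs{u'(p)} = \sqrt{1-\beta^2}/\beta$, which says exactly that the graph meets the $x$-axis at interior angle $\arccos\beta$. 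Substituting back into the arc then forces $\sqrt{R^2 - p^2} = \beta R$, hence $c_0 = -\beta R$ and $p = \sqrt{1-\beta^2}\,R$, matching the claimed profile $u(x) = \sqrt{R^2-x^2} - \beta R$.

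Finally I would impose the area constraint to pin down $R$. Evaluating $\int_{-p}^{p}(\sqrt{R^2-x^2} - \beta R)\,\mathrm{d}x$ with the standard antiderivative of $\sqrt{R^2-x^2}$ and the identity $\arcsin\sqrt{1-\beta^2} = \arccos\beta$ reduces $K[u]=1$ to $R^2\left(\arccos\beta - \beta\sqrt{1-\beta^2}\right) = 1$, i.e.\ $R^{-1} = \sqrt{\arccos\beta - \beta\sqrt{1-\beta^2}}$; the bracketed quantity is strictly positive on $(0,1)$ since its $\beta$-derivative equals $-2\sqrt{1-\beta^2}$ and it vanishes at $\beta = 1$, so $R$ is well defined. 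As each step determines its unknown uniquely, the necessary conditions admit exactly one solution, giving uniqueness (among $C^1$ critical points) together with the regularity already noted. I expect the transversality condition to be the delicate step: one must carefully track the variable domain of integration and the constraint that the endpoint remains on $\{y=0\}$, while checking that the isoperimetric multiplier contributes nothing to the boundary term. That this unique critical point is in fact the global minimizer (and that a minimizer exists) follows most cleanly from the strict-convexity argument of Section \ref{sec:W11-graphs}.
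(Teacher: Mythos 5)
Your proposal is correct and follows essentially the same route as the paper's own proof: a Lagrange multiplier for the area constraint, the Euler--Lagrange equation forcing a circular arc, the endpoint conditions centering it on the $y$-axis, the free-endpoint transversality condition yielding $1/\sqrt{1+u'(p)^2}=\beta$, and the area constraint fixing $R$, with existence deferred to the $W^{1,1}$ convexity argument of Section~\ref{sec:W11-graphs}. The only differences are cosmetic (you impose transversality before the area constraint, and you add a nice explicit check that $\arccos\beta-\beta\sqrt{1-\beta^2}>0$ on $(0,1)$, which the paper leaves implicit).
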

    
    \begin{proof}
        (\textit{Uniqueness}) Assume that a minimizer $u \in C^1[-p,p]$ exists for some $p > 0$ for the above variational problem.
         Let $v \in C^1[-q,q]$ be a competitor curve, nearby to $u$ in the sense of the $C^1$ distance. 
        Since $v$ vanishes at the endpoints of its symmetric domain, the endpoint increments satisfy
        \begin{equation*}
            \delta x(p) = -\delta x(-p), \quad \delta y = 0. 
        \end{equation*}
        
        Since $u$ is a constrained minimizer, there must exist a Lagrange multiplier $\lambda \in \R$ such that the augmented functional
        \begin{equation*}
            \Lambda[u] := J[u] - \lambda K[u]
        \end{equation*}
        is stationary at $u$, i.e., the first variation $\delta \Lambda$ must vanish at $u$. Written out in full, we have
        \begin{equation*}
            \Lambda[u] = \int_{-p}^{p} F(u,u') \,\mathrm{d}x, \quad F(u,u') = \sqrt{1+u'^2} - \beta - \lambda u.
        \end{equation*}
        
        Per Kot \cite{kot2014first}, the first variation $\delta \Lambda$ is given by
        \begin{equation}\label{eq:first-variation}
            \delta \Lambda = \int_{-p}^{p} \left[F_u - \frac{d}{dx} F_{u'}\right] h \,\mathrm{d}x + [F - u' F_{u'}] \delta x \big|_{x = -p}^{x = p}
        \end{equation}
        where $h = v - u$ is defined on $ [-p,p] \cup [-q,q]$, and we extend $u$ and $v$ linearly at the endpoints as needed so that $h$ is well-defined (see e.g. Gelfand-Fomin \cite{gelfand2000calculus}).
    
        Taking variations which fix the support, the integral part of \eqref{eq:first-variation} yields an Euler-Lagrange equation which reads
        \begin{equation*}
            0 = F_u - \frac{d}{dx} F_{u'} = -\lambda - \frac{d}{dx}\left(\frac{u'}{\sqrt{1+{u'}^2}}\right).
        \end{equation*}
        
        Integrating twice, we see that
        \begin{equation*}
            u(x) = \pm \frac{1}{\lambda} \sqrt{1 - (\lambda x + C_1)^2} + C_2
        \end{equation*}
        for some constants of integration $C_1, C_2$. Rearranging and absorbing the appropriate multiplicative constants into $C_1$ and $C_2$, we see that $u(x)$ must satisfy
        \begin{equation*}
            (x + C_1)^2 + (u(x) + C_2)^2 = \left(\frac{1}{\lambda}\right)^2.
        \end{equation*}
        Hence $u(x)$ is an arc of a circle with radius $\frac{1}{\abs{\lambda}}$ and centre $(-C_1, -C_2)$.

        We see that $C_1 = 0$ by symmetry considerations. Indeed, we apply the endpoint conditions $u(p) = u(-p) = 0$ to obtain
        \begin{align*}
            (p + C_1)^2 + C_2^2 &= \frac{1}{\lambda^2} = (-p + C_1)^2 + C_2^2 \\
            \implies \abs{p + C_1} &= \abs{-p + C_1},
        \end{align*}
        from which we conclude that $C_1 = 0$. Hence $u(x)$ is an arc of a circle symmetric about the $y$-axis,
        \begin{equation*}
            x^2 + (u(x) + C_2)^2 = \left(\frac{1}{\lambda}\right)^2.
        \end{equation*}
        Reapplying the endpoint condition $u(\pm p) = 0$, we see that
        \begin{equation*}
            C_2^2 = \left(\frac{1}{\lambda}\right)^2 - p^2 \geq 0 \ \text{ and hence } \ \frac{1}{\abs{\lambda}} \geq p.
        \end{equation*}
        Thus $u(x)$ is given by
        \begin{equation*}
            u(x) = \pm \sqrt{\left( \frac{1}{\lambda}\right)^2 - x^2} \mp \sqrt{\left(\frac{1}{\lambda}\right)^2 - p^2} \quad \text{for all } x \in [-p,p],
        \end{equation*}
        where the signs are such that $u(\pm p) = 0$. Since $u \geq 0$, we must have
        \begin{equation*}
            u(x) = \sqrt{\left(\frac{1}{\lambda}\right)^2 - x^2} - \sqrt{\left(\frac{1}{\lambda}\right)^2 - p^2}.
        \end{equation*}

        Returning to the Euler-Lagrange equation, we can determine the sign of $\lambda$: a computation shows that
        \begin{equation*}
            \frac{u'}{\sqrt{1 + u'^2}} = -\abs{\lambda} x,
        \end{equation*}
        from which we determine that
        \begin{equation*}
            0 = -\lambda - \frac{d}{dx}\left(\frac{u'}{\sqrt{1+u'^2}}\right) = - \lambda - \frac{d}{dx} (-\abs{\lambda}x) = - \lambda + \abs{\lambda},
        \end{equation*}
        so that we have $\abs{\lambda} = \lambda \geq 0$. Hence $u(x)$ is an arc of a circle with radius $\frac{1}{\lambda}$ and centre $(0, \sqrt{(1/\lambda)^2 - p^2})$, where $\lambda \geq 0$ and $\frac{1}{\lambda} \geq p > 0$. The remaining two parameters are determined by the area constraint and the transversality condition.

        We now apply the area constraint. Computing the area functional using the change of variable $x = \frac{1}{\lambda} \sin\theta$, we have:
        \begin{align*}
            K[u] &= \int_{-p}^p \sqrt{\left(\frac{1}{\lambda}\right)^2 - x^2}\,\mathrm{d}x - \int_{-p}^p \sqrt{\left(\frac{1}{\lambda}\right)^2 - p^2} \,\mathrm{d}x \\
            &= \frac{2}{\lambda^2} \int_0^{\arcsin(\lambda p)} \cos^2 \theta \,\mathrm{d}\theta - 2p\sqrt{\left(\frac{1}{\lambda}\right)^2 - p^2} \\
            &= \frac{1}{\lambda^2} \arcsin(\lambda p) - p\sqrt{\left(\frac{1}{\lambda}\right)^2 - p^2}.
        \end{align*}
        We note that this is the area of the circular segment with arc $u(x)$ and chord the $x$-axis. Hence the area constraint reads
        \begin{equation}\label{eq:area-constraint-graphs}
            1 = \frac{1}{\lambda^2} \arcsin(\lambda p) - p \sqrt{\left(\frac{1}{\lambda}\right)^2 - p^2}.
        \end{equation}
        This is a transcendental equation in the variables $\lambda, p$, and is well-defined since $\frac{1}{\lambda} \geq p$. To progress, we need the transversality condition.

        Next from \eqref{eq:first-variation}, by varying the end points, we obtain the transversality condition
        \begin{equation*}
            [F(u,u') - u' F_{u'}(u,u')] \delta x \Big|_{x = -p}^{x = p} = 0\;.
        \end{equation*}
        From the definition of the integrand $F$, we find that
        \begin{equation*}
            F - u' F_{u'} = \frac{1}{\sqrt{1 + u'^2}} - \beta - \lambda u.
        \end{equation*}
        Furthermore, we have that $[u'(p)]^2 = [u'(-p)]^2$. Since $u(\pm p) = 0$, the transversality condition yields the \textit{angle condition}
        \begin{equation}\label{eq:angle-condition-graphs}
            \frac{1}{\sqrt{1+u'(p)^2}} = \beta.
        \end{equation}
        Note that this shows the tangent vector $(1, u'(p))$ makes an angle of $\arccos \beta$ with the positive $x$-axis, as claimed. (Note also that this implies $0 < \beta < 1$, so our assumption was necessary.) From the definition of $u'(x)$, we find that 
        \begin{equation*}
            \frac{1}{\sqrt{1 + u'(p)^2}} = \sqrt{1 - (\lambda p)^2} = \beta,
        \end{equation*}
        from which we obtain
        \begin{equation}\label{eq:endpoint-value-graphs}
            \lambda p = \sqrt{1 - \beta^2}.
        \end{equation}
        Inserting this into the area constraint \eqref{eq:area-constraint-graphs}, we find that
        \begin{equation*}
            \lambda^2 = \arcsin(\lambda p) - \lambda p \sqrt{1 - (\lambda p)^2} = \arccos\beta - \beta \sqrt{1 - \beta^2},
        \end{equation*}
        where we have used the identity $\arcsin \sqrt{1-\theta^2} = \arccos \theta$.
        
        Define $R = \frac{1}{\lambda}$, the radius of the circular arc. We may then identify $\lambda$ as the \textit{curvature} of the arc, so that $R = \frac{1}{\lambda}$ is the \textit{radius of curvature}. Written out explicitly, we have
        \begin{equation*}\label{eq:minimizer-curvature-graphs}
            R^{-1} = \sqrt{\arccos \beta - \beta\sqrt{1 - \beta^2}}.
        \end{equation*}
        From \eqref{eq:endpoint-value-graphs}, it follows that $p = \sqrt{1 - \beta^2} R$ and $\sqrt{R^2 - p^2} = \beta R$.
        
        Therefore the minimizer $u(x)$ must have the form
        \begin{equation*}
            u(x) = \sqrt{R^2 - x^2} - \beta R,
        \end{equation*}
        where $R$ is defined by \eqref{eq:minimizer-curvature-graphs}, and $u(x)$ is defined on $[-p,p]$ where $p = \sqrt{1 - \beta^2} R$. This proves uniqueness.

        (\textit{Existence}) This can be proved as in Section \ref{sec:W11-graphs}, so we omit the proof here.

        (\textit{Regularity}) We have shown that 
        \begin{equation*}
            u(x) = \sqrt{R^2 - x^2} - \beta R
        \end{equation*}
        is the unique candidate for a minimizer of our variational problem. The smoothness of $u(x)$ follows immediately, since $\abs{x} \leq p < R$ and $u(x)$ is a composition of smooth functions. \qed
    \end{proof}

    \begin{remark}\label{rmk:angle-condition}
        With respect to the parametrization $t \mapsto (t, u(t))$, the graph of $u(x)$ has (non-unit) tangent vector $(1, u'(x))$. Normalizing and rotating the tangent vector $90^\circ$ counter-clockwise, we find that the outward unit normal $\nu$ of the graph of $u(x)$ is given by
        \begin{equation*}
            \nu = \left(- \frac{u'(x)}{\sqrt{1 + u'(x)^2}}, \frac{1}{\sqrt{1 + u'(x)^2}}\right).
        \end{equation*}
        As such, we see that \eqref{eq:angle-condition-graphs} tells us that
        \begin{equation*}
            \nu \cdot e_2 = \beta \quad \text{at } x = \pm p,
        \end{equation*}
        in agreement with the modern statement of the theorem in Maggi \cite{maggi2012sets}. For our special case of $\beta = \frac{1}{2}$, we obtain an interior angle of $60^\circ$, so the exterior angle is $120^\circ$ as in Figure \ref{fig:optimal-solution}.
        
         Further we note that \eqref{eq:angle-condition-graphs} is consistent with our assumption that $0<\beta<1$. The case $\beta=1$ is impossible, as then $u'(\pm p)=0$, which yields only a trivial solution by ODE uniqueness and hence cannot satisfy the area condition. The case $\beta=0$ is classical and the minimizer is a half disk meeting the axis at vertical tangents. We could consider other values of $\beta$ by enlarging our minimization problem as in Maggi \cite{maggi2012sets} over the class of finite perimeter sets in the upper half plane. Indeed, in case $\beta\in (-1,0)$, Maggi's theorem applies and we obtain curves that meet the $x$-axis at an angle given by $\arccos \beta>\pi/2$ (see \cite[Theorem 19.21]{maggi2012sets}).  These are not graphs, and in fact the functional $J$ admits no minimizer in this regime. In case $\beta\le -1$, it is easy to verify that the minimizer is a ball disjoint from the axis (see Maggi \cite[Remark 19.20]{maggi2012sets}).  When $\beta\ge 1$ it is shown in Maggi \cite[Remark 19.19]{maggi2012sets} that the functional is unbounded below, and there is no minimizer even in the general setting. We note that it is also proved in \cite{maggi2012sets} Section 19.1.1 that $|\beta|<1$ is a necessary condition for lower semicontinuity.
        
    \end{remark}
    \section{Proof for Graphs of \texorpdfstring{$W^{1,1}$}{W11} Functions}\label{sec:W11-graphs}

    We wish to solve the variational problem below, with $0 < \beta < 1$. Following the approach presented in Talenti \cite{talenti1993standard}, we prove an isoperimetric inequality, from which we establish the existence of a unique smooth minimizer of the following variational problem:
    \begin{table}
        \centering
        \begin{tabular}{c c}
             $\mathtt{Among}$ & nonnegative functions $u \in W^{1,1}[-p,p]$ with $p > 0$ free\\ [1ex]
             $\mathtt{minimize}$ & $\displaystyle J[u] = \int_{-p}^p \left[\sqrt{1+u'^2} - \beta\right] \,\mathrm{d}x$ \\ [1ex]
             $\mathtt{subject\;to}$ & $\displaystyle K[u] = \int_{-p}^p u \,\mathrm{d}x = 1$ \\ [1ex]
             $\mathtt{and}$ & $u(p) = u(-p) = 0.$
        \end{tabular}
               
        \label{tab:variational-problem-sobolev}
    \end{table}  

    Taken on its own, the proof below is perhaps a little philosophically unsatisfying, as it is essentially a mathematical sleight of hand, which avoids the direct method of convergence of minimizing sequences. However, the hard work of identifying a candidate for a minimizer was carried out in Section \ref{sec:C1-graphs}, so we are free to pull the rabbit out of a hat!
    
    \begin{theorem}[Isoperimetric Inequality]\label{isoperimetric-inequality}
        Let $0 < \beta < 1$, and let $u$ be a nonnegative real-valued function in $W^{1,1}[-p,p]$ defined on an interval $[-p,p]$, with $p > 0$ a free parameter. Assume $u$ vanishes at both endpoints, i.e.,
        \begin{equation*}
            u(-p) = u(p) = 0.
        \end{equation*}
        Define the length of the graph of $u$ and the area under the graph of $u$ by
        \begin{equation*}
            L = \int_{-p}^p \sqrt{1 + u'^2} \,\mathrm{d}x, \quad  A = \int_{-p}^p u \,\mathrm{d}x
        \end{equation*}
        respectively. Then
        \begin{equation}\label{eq:our-isoperimetric-inequality}
            L \geq \left(\frac{\arccos\beta}{\sqrt{1 - \beta^2}} + \beta\right) p + \sqrt{1 - \beta^2} \frac{A}{p}
        \end{equation}
        with equality if and only if the graph of $u$ is an arc of a circle of the form
        \begin{equation*}
            u(x) = \sqrt{R^2 - x^2} - \beta R,
        \end{equation*}
        where $R$ is determined by
        \begin{equation*}
            R = \frac{p}{\sqrt{1 - \beta^2}}.
        \end{equation*}
    \end{theorem}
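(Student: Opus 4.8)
The plan is to prove \eqref{eq:our-isoperimetric-inequality} by a convexity (``calibration'') argument in the spirit of Talenti \cite{talenti1993standard}, using the candidate minimizer identified in Section \ref{sec:C1-graphs} as a fixed reference. Write $\phi(s) = \sqrt{1+s^2}$ and let $u_0(x) = \sqrt{R^2 - x^2} - \beta R$ with $R = p/\sqrt{1-\beta^2}$ be the candidate arc; note that $u_0(\pm p) = 0$ and that $u_0'(x) = -x/\sqrt{R^2-x^2}$ is bounded on $[-p,p]$ since $p < R$. The crucial observation is that the ``slope field''
\[
    g(x) := \phi'(u_0'(x)) = \frac{u_0'(x)}{\sqrt{1 + u_0'(x)^2}} = -\frac{x}{R}
\]
is an affine function of $x$. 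Since $\phi$ is strictly convex on $\R$, the support-line inequality gives, pointwise almost everywhere,
\[
    \sqrt{1+u'^2} = \phi(u') \geq \phi(u_0') + g(x)\,(u' - u_0'),
\]
with equality if and only if $u'(x) = u_0'(x)$.

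Integrating this inequality over $[-p,p]$ yields $L \geq \int_{-p}^p \phi(u_0')\,\mathrm{d}x - \int_{-p}^p g\,u_0'\,\mathrm{d}x + \int_{-p}^p g\,u'\,\mathrm{d}x$, and I would dispatch the three terms as follows. For the $u$-dependent term, because $g(x) = -x/R$ is affine, integration by parts (valid for $W^{1,1}$ functions) together with the boundary conditions $u(\pm p) = 0$ gives
\[
    \int_{-p}^p g\,u'\,\mathrm{d}x = \Big[-\tfrac{x}{R}\,u\Big]_{-p}^p + \frac1R\int_{-p}^p u\,\mathrm{d}x = \frac{A}{R} = \sqrt{1-\beta^2}\,\frac{A}{p},
\]
which is exactly the second term on the right-hand side of \eqref{eq:our-isoperimetric-inequality}. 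For the constant terms, I would use the algebraic identity $\phi(u_0') - g\,u_0' = 1/\sqrt{1+u_0'^2} = \sqrt{R^2 - x^2}/R$, so that $\int_{-p}^p[\phi(u_0') - g\,u_0']\,\mathrm{d}x = \frac1R\int_{-p}^p\sqrt{R^2-x^2}\,\mathrm{d}x$; the substitution $x = R\sin\theta$ evaluates this to $R\big(\arccos\beta + \beta\sqrt{1-\beta^2}\big) = p\big(\tfrac{\arccos\beta}{\sqrt{1-\beta^2}} + \beta\big)$, which is the first term. Combining the three contributions establishes \eqref{eq:our-isoperimetric-inequality}.

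For the equality case, I would argue that strict convexity of $\phi$ forces $u' = u_0'$ almost everywhere; since both functions lie in $W^{1,1}[-p,p]$ and are therefore absolutely continuous, $u - u_0$ is constant, and the shared endpoint condition $u(\pm p) = u_0(\pm p) = 0$ pins this constant to zero, giving $u \equiv u_0$. This simultaneously yields the rigidity statement and identifies the extremal as precisely the arc of radius $R = p/\sqrt{1-\beta^2}$.

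I expect the genuine difficulty to lie not in the estimate itself---the computations above are routine once the reference arc is fixed---but in \emph{identifying the correct calibration}, that is, recognizing that tangent-line linearization against $u_0$ produces a slope field $g$ that is \emph{affine} in $x$. This is exactly what makes the $u$-dependent contribution collapse to the area $A$ after a single integration by parts, and it is what the hard work of Section \ref{sec:C1-graphs} secretly supplies. A secondary point requiring care is the rigidity: the argument must genuinely use \emph{strict} convexity of $\phi$ (so that pointwise equality forces $u' = u_0'$, not merely equality of the integrals) together with the endpoint constraints, in order to conclude uniqueness of the extremal arc.
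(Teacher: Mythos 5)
Your proof is correct and takes essentially the same approach as the paper: your support-line inequality $\phi(w') \geq \phi(u_0') + g(x)(w' - u_0')$ with the affine slope field $g(x) = -x/R$ is exactly the paper's strict-convexity inequality for its calibrated integrand $f(x,u') = \sqrt{1+u'^2} - \beta + \tfrac{\sqrt{1-\beta^2}}{p}\,x\,u'$, whose $u'$-derivative vanishes along the reference arc. The only difference is organizational --- the paper packages the calibration term into a functional $\Lambda[w,p]$, evaluates it on the arc, and integrates by parts once at the end, whereas you integrate by parts term-by-term up front; the convexity, rigidity, and endpoint arguments coincide.
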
   
    
    \begin{proof}
        Let $p > 0$ be fixed, and consider the functional
        \begin{equation*}
            \Lambda[u,p] = \frac{\sqrt{1 - \beta^2}}{p} \int_{-p}^{p} \left[\sqrt{1 + u'^2} - \beta + \frac{\sqrt{1 - \beta^2}}{p} x u'\right] \,\mathrm{d}x.
        \end{equation*}
        Note that the integrand $f(x,u') = \sqrt{1 + (u')^2} - \beta + \tfrac{\sqrt{1-\beta^2}}{p} x u'$ is strictly convex with respect to $u'$: we have
        \begin{equation*}
            \frac{\partial^2}{\partial(u')^2} f(x,u') = \frac{1}{(1 + u'^2)^{\tfrac{3}{2}}} > 0.
        \end{equation*}
        Thus for any nonnegative $u, w \in W^{1,1}[-p,p]$ with $u(p) = u(-p) = 0$ and $w(p) = w(-p) = 0$, we have
        \begin{equation*}
            f(x,w') \geq f(x,u') + (w' - u') \frac{\partial}{\partial(u')}f(x,u')
        \end{equation*}
        with equality if and only if $w' = u'$.

        Define
        \begin{equation}\label{eq:unique-minimizer}
            R = \frac{p}{\sqrt{1 - \beta^2}} \quad\text{and}\quad u(x) = \sqrt{R^2 - x^2} - \beta R.
        \end{equation}
        Note since $0 < p < R$ that $u(x)$ is smooth up to the boundary of $[-p,p]$, so $u \in W^{1,1}[-p,p]$. Furthermore, we have $u(p) = u(-p) = 0$ by construction, and
        \begin{align*}
            \frac{\partial}{\partial u'} f(x, u') = \frac{u'}{\sqrt{1 + u'^2}} + \frac{\sqrt{1 - \beta^2}}{p} x = \frac{-x}{R} + \frac{x}{R} = 0.
        \end{align*}
        Therefore for any $w \in W^{1,1}[-p, p]$ with $w(p) = w(-p) = 0$, we have
        \begin{equation*}
            f(x,w') \geq f(x,u')
        \end{equation*}
        with equality if and only if $w' = u'$. Since $w$ and $u$ both vanish at the endpoints, $w' = u'$ holds if and only if $w = u$. As such, we have
        \begin{equation*}
            \Lambda[w,p] = \frac{\sqrt{1 - \beta^2}}{p} \int_{-p}^p f(x,w') \,\mathrm{d}x \geq \frac{\sqrt{1 - \beta^2}}{p} \int_{-p}^p f(x,u') \,\mathrm{d}x = \Lambda[u,p]
        \end{equation*}
        with equality if and only if $w = u$. Therefore $u$ is the unique minimizer of the functional $\Lambda[u,p]$. \textit{A posteriori}, we see that $u$ is smooth on $[-p, p]$, as noted above.
        
        We wish to compute the value of the functional attained by the unique minimizer $u(x)$ given by \eqref{eq:unique-minimizer}. Recalling that $p = \sqrt{1 - \beta^2}R$, we have
        \begin{align*}
            \Lambda[u] 
            &= \frac{\sqrt{1-\beta^2}}{p} \int_{-p}^p f(x,u') \,\mathrm{d}x \\
            &= \frac{1}{R} \int_{-p}^{p} \left[\sqrt{1 + u'^2} - \beta + \frac{1}{R} x u'\right] \,\mathrm{d}x \\
            &= \frac{1}{R} \int_{-p}^{p} \left[\frac{R}{\sqrt{R^2 - x^2}} - \beta + \frac{1}{R} x \left(-\frac{x}{\sqrt{R^2 - x^2}}\right)\right] \,\mathrm{d}x \\
            &= \frac{1}{R} \int_{-p}^{p} \left[\frac{R^2 - x^2}{R\sqrt{R^2 - x^2}} - \beta\right] \,\mathrm{d}x = \frac{1}{R} \int_{-p}^{p} \left[\frac{\sqrt{R^2 - x^2}}{R} - \beta\right] \,\mathrm{d}x \\
            &= \frac{2}{R^2} \int_0^p \sqrt{R^2 - x^2} \,\mathrm{d}x - 2\beta\sqrt{1-\beta^2}.
        \end{align*}
        The remaining integral can be computed using the change of variables $x = R \sin\theta$. Note that $\arcsin \sqrt{1 - \beta^2} = \arccos \beta$. We have
        \begin{align*}
            \int_0^p \sqrt{R^2 - x^2} \,\mathrm{d}x
            &= \int_0^{\arcsin(p / R)} \sqrt{R^2 - R^2 \sin^2\theta} R \cos \theta \,\mathrm{d}\theta \\
            &= R^2 \int_0^{\arcsin\sqrt{1-\beta^2}} \cos^2 \theta \,\mathrm{d}\theta = \frac{R^2}{2} \int_0^{\arccos\beta} [1 + \cos(2\theta)] \,\mathrm{d}\theta \\
            &= \frac{R^2}{2} \left[\theta + \frac{1}{2} \sin(2\theta)\right]\big|_{0}^{\arccos\beta} = \frac{R^2}{2} \left[\theta + \sin(\theta)\cos(\theta)\right]\big|_{0}^{\arccos\beta} \\
            &= \frac{R^2}{2} \left[\arccos\beta + \sqrt{1 - \beta^2} \beta \right].
        \end{align*}
        Inserting this into our expression for $\Lambda[u,p]$, we obtain
        \begin{equation*}
            \Lambda[u,p]
            = \arccos\beta - \beta\sqrt{1 - \beta^2}.
        \end{equation*}
        
        Thus for all nonnegative $w \in W^{1,1}[-p,p]$ with $w(-p) = w(p) = 0$, we have
        \begin{equation*} \label{eq:sharp-lower-bound-on-constrained-functional}
            \Lambda[w,p] \geq \arccos\beta - \beta\sqrt{1-\beta^2},
        \end{equation*}
        with equality if and only if $w = u$ as given in \eqref{eq:unique-minimizer}.
        
        Integrating the functional $\Lambda[w,p]$ by parts, the inequality \eqref{eq:sharp-lower-bound-on-constrained-functional} yields
        \begin{equation*}
            \frac{\sqrt{1-\beta^2}}{p} \int_{-p}^p \left[\sqrt{1 + w'^2} - \beta - \frac{\sqrt{1-\beta^2}}{p} w\right] \,\mathrm{d}x \geq \arccos\beta - \beta\sqrt{1-\beta^2},
        \end{equation*}
        with equality if and only if $w = u$. Written in terms of $L$ and $A$, we have
        \begin{equation*}
            \frac{\sqrt{1 - \beta^2}}{p} \left[L - 2\beta p - \frac{\sqrt{1-\beta^2}}{p} A\right]\geq \arccos\beta - \beta\sqrt{1-\beta^2},
        \end{equation*}
        with equality if and only if $w = u$. Rearranging, we obtain the isoperimetric inequality
        \begin{equation*}
            L \geq \left(\frac{\arccos\beta}{\sqrt{1 - \beta^2}} + \beta\right) p + \sqrt{1 - \beta^2} \frac{A}{p},
        \end{equation*}
        with equality if and only if $w = u$ as given in \eqref{eq:unique-minimizer}, as was to be shown. \qed
    \end{proof}
    
    \begin{theorem}[Existence-Uniqueness-Regularity in $W^{1,1}$]
        For our variational problem, there exists a unique smooth minimizer, given by
        \begin{equation*}
            u(x) = \sqrt{R^2 - x^2} - \beta R,
        \end{equation*}
        where $R$ is determined by
        \begin{equation*}
            R^{-1} = \sqrt{\arccos\beta - \beta\sqrt{1 - \beta^2}}
        \end{equation*}
        and $p = \sqrt{1 - \beta^2} R$. Furthermore, the graph of $u$ meets the $x$-axis at $\pm p$ with interior angle $\arccos \beta$.
    \end{theorem}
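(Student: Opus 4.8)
The plan is to deduce the result almost immediately from the Isoperimetric Inequality (Theorem \ref{isoperimetric-inequality}) by a two-stage minimization: first over all admissible $u$ with a fixed support half-width $p$, and then over $p$ itself. The first observation is that on the constraint set $K[u] = 1$ the objective simplifies to $J[u] = L - 2\beta p$, where $L$ is the graph length and $A = \int_{-p}^p u \,\mathrm{d}x = 1$. Substituting $A = 1$ into \eqref{eq:our-isoperimetric-inequality} and rearranging yields the lower bound
\begin{equation*}
    J[u] \geq g(p) := \left(\frac{\arccos\beta}{\sqrt{1-\beta^2}} - \beta\right) p + \frac{\sqrt{1-\beta^2}}{p},
\end{equation*}
valid for every admissible competitor, with equality precisely when $u$ is the circular arc associated to $p$ in the equality case of Theorem \ref{isoperimetric-inequality}.

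Next I would minimize the one-variable function $g$ over $p > 0$. Writing $g(p) = ap + b/p$ with $b = \sqrt{1-\beta^2} > 0$ and $a = (\arccos\beta - \beta\sqrt{1-\beta^2})/\sqrt{1-\beta^2}$, the key elementary fact to verify is that $a > 0$: setting $\theta = \arccos\beta \in (0,\pi/2)$, this reduces to the inequality $\theta > \tfrac12 \sin(2\theta)$, which holds on $(0,\pi/2)$ since $\sin x < x$ for $x>0$. With both coefficients positive, $g$ is strictly convex on $(0,\infty)$ and attains its unique minimum, by the AM--GM inequality, at $p_* = \sqrt{b/a}$, with minimal value $g(p_*) = 2\sqrt{ab}$. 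Chaining the two bounds gives $J[u] \geq 2\sqrt{ab}$ for every admissible $u$.

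It then remains to show the bound is attained, and attained uniquely. Here the crucial compatibility check is that the arc selected by the equality case at $p = p_*$ actually satisfies the area constraint $A = 1$: a direct computation (identical to the one in Section \ref{sec:C1-graphs}) shows that the arc with $R = p/\sqrt{1-\beta^2}$ has area $\tfrac{p^2}{1-\beta^2}(\arccos\beta - \beta\sqrt{1-\beta^2}) = p^2 a/b$, which equals $1$ exactly when $p = p_* = \sqrt{b/a}$. Thus the arc $u_*$ at $p_*$ is admissible, lies in the equality case of Theorem \ref{isoperimetric-inequality}, and therefore realizes $J[u_*] = g(p_*) = 2\sqrt{ab}$. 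A short algebraic simplification confirms $p_* = \sqrt{1-\beta^2}\,R$ with $R^{-1} = \sqrt{\arccos\beta - \beta\sqrt{1-\beta^2}}$, matching the claimed minimizer and establishing existence. For uniqueness, if any admissible $u$ achieves the bound then both inequalities in $J[u] \geq g(p) \geq g(p_*)$ are equalities: the second forces $p = p_*$ by the strict minimum of $g$, and the first forces $u$ to be the arc at $p_*$, so $u = u_*$. Smoothness is then immediate from the explicit formula, since $\abs{x} \leq p_* < R$.

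The main obstacle, and the conceptual heart of the argument, is the compatibility of the two equality cases. The isoperimetric inequality forces any extremal $u$ to be a circular arc whose area is \emph{determined} by its support half-width $p$, and one must check that the value of $p$ minimizing $g$ is precisely the one for which that forced area equals the prescribed value $1$. That these two a priori independent conditions single out the same $p_*$ is exactly what makes the free-endpoint problem well-posed, and it is the step most deserving of care; everything else is bounded-below-by-a-convex-function bookkeeping.
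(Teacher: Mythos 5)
Your proposal is correct and follows essentially the same route as the paper's own proof: apply Theorem \ref{isoperimetric-inequality} with $A=1$ to get $J[u]\geq g(p)$, minimize the convex function $g$ over $p>0$, and identify the equality case. In fact you are slightly more careful than the paper on two points it leaves implicit --- verifying that the coefficient $\arccos\beta - \beta\sqrt{1-\beta^2}$ is positive (so that $g$ has an interior minimum at all), and checking that the equality-case arc at $p_*$ actually has area $1$ --- though you omit the (trivial, from the explicit formula) verification of the contact-angle claim, which the paper disposes of by citing Remark \ref{rmk:angle-condition}.
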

    
    \begin{proof}
        Observe that the functional $J[u]$ in our problem has the form
        \begin{equation*}
            J[u] = \int_{-p}^p [\sqrt{1 + u'^2} - \beta] \,\mathrm{d}x = L - 2\beta p.
        \end{equation*}
        By the isoperimetric inequality \eqref{eq:our-isoperimetric-inequality}, we have that
        \begin{equation}\label{eq:lower-bound-on-objective-functional}
            L - 2\beta p \geq \left(\frac{\arccos\beta}{\sqrt{1-\beta^2}} - \beta\right) p + \sqrt{1 - \beta^2} \frac{A}{p},
        \end{equation}
        with equality if and only if $u$ is given by \eqref{eq:unique-minimizer}.
        
        We wish to determine when the right-hand side of \eqref{eq:lower-bound-on-objective-functional} is minimized. For the function $g(p)$ which defines the right-hand side, we have
        \begin{align*}
            g'(p) = \left(\frac{\arccos\beta}{\sqrt{1-\beta^2}}-\beta\right) - \sqrt{1-\beta^2}\frac{A}{p^2}, \\
            g''(p) = 2\sqrt{1-\beta^2} \frac{A}{p^3} > 0 \quad\text{for all } p > 0,
        \end{align*}
        so $g(p)$ is concave up. Furthermore, $g'(p)$ vanishes precisely when
        \begin{equation*}
            p = p_0 := \sqrt{\frac{(1-\beta^2) A}{\arccos\beta - \beta\sqrt{1-\beta^2}}}.
        \end{equation*}
        Since $g(p)$ is concave up, it follows that $p = p_0$ yields a minimum of $g(p)$. The minimum value is then
        \begin{equation*}
            g(p_0) = 2 \sqrt{A(\arccos\beta - \beta\sqrt{1-\beta^2})}.
        \end{equation*}

        Therefore for $p > 0$ free and $w \in W^{1,1}[-p,p]$ with $w(p) = w(-p) = 0$, we have
        \begin{equation*}
            L - 2\beta p \geq 2\sqrt{A(\arccos \beta - \beta \sqrt{1 - \beta^2})},
        \end{equation*}
        with equality if and only if $p = p_0$ and $w = u$ with
        \begin{equation*}
            u(x) = \sqrt{R^2 - x^2} - \beta R, \quad R = \frac{p_0}{\sqrt{1 - \beta^2}} = \sqrt{\frac{A}{\arccos\beta - \beta\sqrt{1-\beta^2}}}.
        \end{equation*}
        
        As such, we see that among nonnegative functions $w \in W^{1,1}[-p,p]$ with $p > 0$ free, $w(p) = w(-p) = 0$, and $A = 1$, the functional $J[u] = L - 2\beta p$ has the unique minimizer
        \begin{equation*}
            u(x) = \sqrt{R^2 - x^2} - \beta R, \quad \text{with } R^{-1} = \sqrt{\arccos\beta - \beta\sqrt{1 - \beta^2}},
        \end{equation*}
        which \textit{a posteriori} is smooth up to the boundary on the interval $[-p,p]$ with
        \begin{equation*}
            p = \sqrt{1-\beta^2} R.
        \end{equation*}
        Thus our variational problem has a unique smooth minimizer, as was to be shown. The angle condition is satisfied per Remark \ref{rmk:angle-condition}. 
        \qed
    \end{proof}

    \begin{remark} If we assume more generally we have competition among non-negative $w\in W^{1,1}[a,b]$ for any support $[a,b]$, we may assume by extending $w$ by zero that $w\in W^{1,1}[-p,p]$ with $p=\max\{|a|,|b|\}$. Note that the extension increases the value of $J[w]$ and so the above argument can be extended to minimization of $J[u]$ among all compactly supported $W^{1,1}$ functions.
    \end{remark}

    \section{Further Conjectures}\label{sec:conjectures}

    In this article, we set out to answer the following question: can we restrict the setting of Maggi Thm.\ 19.21 to one where geometric measure theory is not necessary and still say something meaningful about existence, uniqueness, and regularity of minimizers?
    
    It was not clear \textit{a priori} whether the answer was yes, but in Sections \ref{sec:C1-graphs} and \ref{sec:W11-graphs} we saw that restricting the setting to the class of curves which are expressible as graphs of $C^1$ or $W^{1,1}$ functions gives the augmented functional a useful strict convexity property. In these restricted settings, a full proof of existence-uniqueness-regularity is possible using only classical (i.e.\ non-direct) methods, adapting work of Talenti \cite{talenti1993standard}. This begs the question: can Maggi Thm.\ 19.21 be proved using ``classical" techniques in dimension $n > 2$?

    In connection with the class of improper partitioning problems described in Section \ref{sec:intro}, we conclude with an open question and two conjectures related to this research:
    \begin{enumerate}[label=(\roman*)]
        \item For the improper planar partitioning problem with $N = 1$ chamber and $M = 2$ improper chambers, we conjectured that $\Omega_0$ is necessarily compact and that Steiner symmetrization would allow us to reduce to the variational problem solved by Maggi Thm.\ 19.21. Is this correct?

        \item For the improper planar partitioning problem with $N = 1$ chamber and $M = 3$ improper chambers, we conjecture  that the unique locally perimeter minimizing configuration (up to planar isometries) is the one depicted in Figure \ref{fig:one-compact-three-unbounded}. This configuration is given by stereographic projection of the standard $4$-bubble on the $2$-sphere, with the point at infinity placed at one of the junctions.  It also agrees with the Steiner partition of $\R^2$ outside of sufficiently large compact sets.

        \begin{figure}
            \centering
            \includegraphics[scale=0.5]{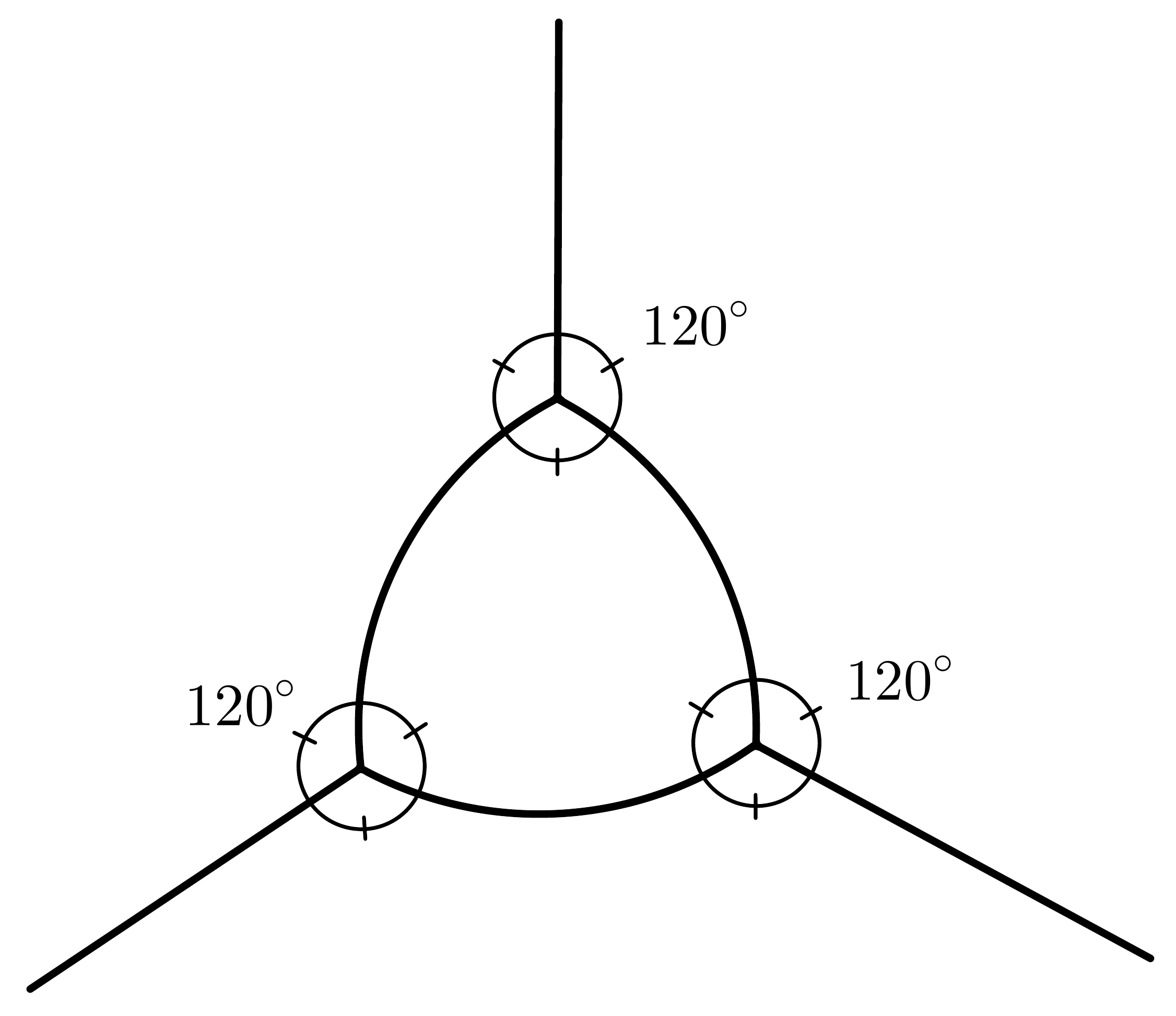}
            \caption{Conjectured optimal configuration for the improper planar partitioning problem with one chamber and three improper chambers}
            \label{fig:one-compact-three-unbounded}
        \end{figure}

        \item For the improper planar partitioning problem with $N = 2$  chambers of equal area and $M = 2$ improper chambers, we conjecture  that the unique locally perimeter minimizing configuration (up to planar isometries) is the one depicted in Figure \ref{fig:two-compact-two-unbounded}. This configuration is given by stereographic projection of the standard $4$-bubble on the $2$-sphere, with the point at infinity placed at the midpoint of one of the interfaces.  It also agrees with the minimizing cone-like 2-cluster in $\R^2$ outside of sufficiently large compact sets.

        \begin{figure}
            \centering
            \includegraphics[scale=0.4]{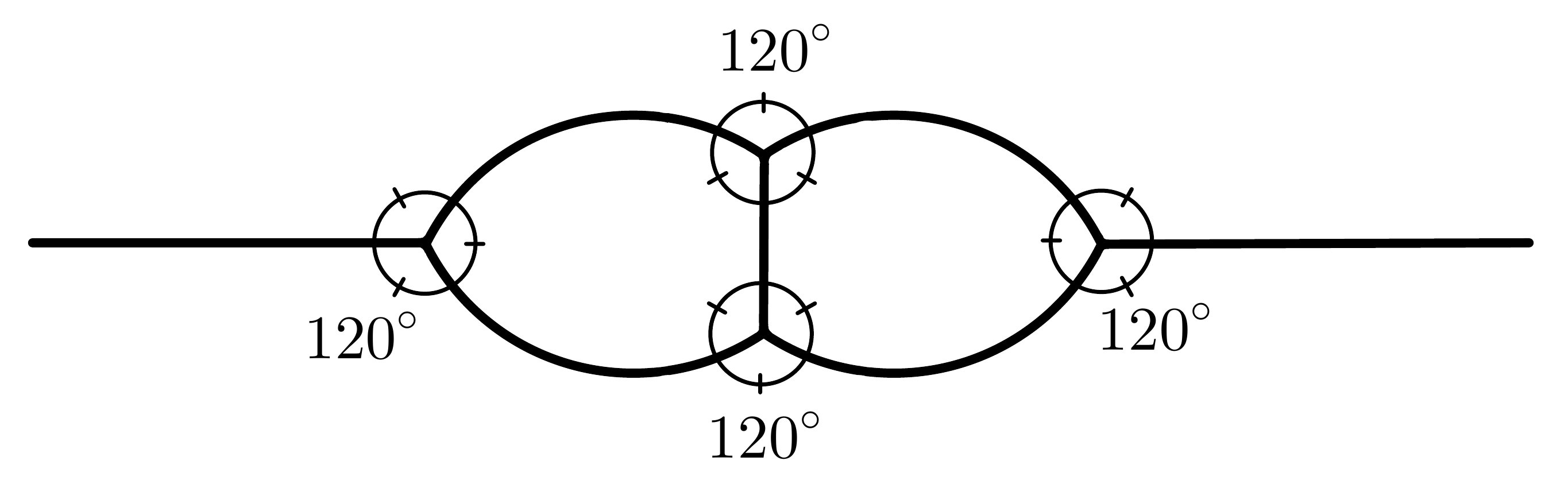}
            \caption{Conjectured optimal configuration for the improper planar partitioning problem with two chambers of equal area and two improper chambers}
            \label{fig:two-compact-two-unbounded}
        \end{figure}




    \end{enumerate}


\begin{thebibliography}{6}

    \bibitem{bellettini2011curvature}
        Bellettini, G. and Novaga, M.: Curvature evolution of nonconvex lens-shaped domains. In: Journal f\"ur die reine und angewandte Mathematik, 2011 (2011).

    \bibitem{Blasjo}
        Blåsjö, V.: The isoperimetric problem. In: The American Mathematical Monthly, 112 (2005), pp. 526--566.

    \bibitem{Foisy_etal}
        Foisy, J., Garcia, M.A., Brock, J.F., Hodges, N., and Zimba, J.: The standard double soap bubble in $\R^2$ uniquely minimizes perimeter. In: Pacific Journal of Mathematics, 159 (1993), pp. 47--59. 

    \bibitem {gelfand2000calculus}
        Gelfand, I.M. and Fomin, S.V.: Calculus of Variations.
        Dover Publications Inc, New York, USA (2000).

    \bibitem{HMRRproofDB}
        Hutchings, M., Morgan, F., Ritor\'e, M., and Ros, A.: Proof of the double bubble conjecture. In: Annals of Mathematics, 155 (2002), pp. 459--489.
            
    \bibitem{kot2014first}
        Kot, M.: A first course in the calculus of variations. Student Mathematical Library, Vol. 72. American Mathematical Society, Rhode Island, USA (2014).

    \bibitem{Lawlor2014}
        Lawlor, G.R.: Double bubbles for immiscible fluids in $\R^n$. In: Journal of Geometric Analysis, 24 (2014), pp. 190--204.
    
    \bibitem{maggi2012sets}
        Maggi, F.: Sets of finite perimeter and geometric variational problems: an introduction to Geometric Measure Theory. Cambridge Studies in Advanced Mathematics, Vol. 135. Cambridge University Press, Cambridge, UK (2012).

    \bibitem{milman2022structure}
        Milman, E. and Neeman, J.: The structure of isoperimetric bubbles on $\R^n$ and $\bbbs^n$. arXiv preprint arXiv:2205.09102, (2022).

    \bibitem{milman2023plateau}
        Milman, E. and Neeman, J.: Plateau bubbles and the quintuple bubble theorem on $\bbbs^n$. arXiv preprint arXiv:2307.08164, (2023).

    \bibitem{Morgan1994soap}
        Morgan, F.: Soap bubbles in $\R^2$ and in surfaces. In: Pacific Journal of Mathematics, 165 (1994), pp. 347--361.

    \bibitem{MorganWichi2002}
        Morgan, F. and Wichiramala, W.: The standard double bubble is the unique stable double bubble in $\R^2$. In: Proceedings of the American Mathematical Society, 130 (2002), pp. 2745--2751.

    \bibitem{Reichardt2007}
        Reichardt, B.W.: Proof of the double bubble conjecture in $\R^n$. In: Journal of Geometric Analysis, 18 (2007), pp. 172--191.

    \bibitem{schnurer2011evolution}
        Schn\"urer, O., Azouani, A., Georgi, M., Hell, J., Jangle, N., Koeller, A., Marxen, T., Ritthaler, S., S\'aez, M., Schulze, F., and Smith, B.: Evolution of convex lens-shaped networks under the curve shortening flow. In: Transactions of the American Mathematical Society, 363 (2011), pp.2265--2294.

    \bibitem{sullivan1999geometry}
        Sullivan, J.M.: The geometry of bubbles and foams. In: Foams and Emulsions, vol. 354 of Nato ASI Series, Springer, 1999, pp. 379-402.
    
    \bibitem{talenti1993standard}
        Talenti, G.: The standard isoperimetric theorem. In: Handbook of Convex Geometry, pp. 73 -- 123. Elsevier, Amsterdam, NL (1993).

    \bibitem{wichiramala2004proof}
        Wichiramala, W.: Proof of the planar triple bubble conjecture. In: Journal f\"ur die reine und angewandte Mathematik, 2004 (2004).
    
\end{thebibliography}
\end{document}